\documentclass[11pt]{article}

\usepackage{amsmath, amsthm, amssymb}
\usepackage{hyperref}
\usepackage{booktabs}
\usepackage{geometry}
\usepackage{makecell}
\newtheorem{theorem}{Theorem}
\newtheorem{lemma}[theorem]{Lemma}
\newtheorem{corollary}[theorem]{Corollary}

\title{On the Structure of 3D Queen Domination}
\author{
  Mahesh Ramani\\
  Independent\\
  1105 Timber Oaks Rd\\
  Edison, New Jersey\\
  \texttt{mahesh.ramani.iyer@gmail.com}
}
\date{}

\begin{document}

\maketitle

\begin{abstract}
We study the domination number $\gamma(Q^3_n)$ of the three-dimensional
$n\times n\times n$ queen graph. The main result is a stratified theorem
computing, for each position type---corner, edge, face, or interior---the
number of inner-core vertices dominated by a queen, and showing in particular
that interior placements dominate strictly more core cells than boundary
placements. This yields a symmetry-reduction principle via the
octahedral group and complements the standard counting lower bound and layered
upper bound, giving $\gamma(Q^3_n)=\Theta(n^2)$. We also certify exact values
for $n\le 6$ via integer linear programming and independent verification.
\end{abstract}
\section{Introduction}

The queen's domination problem originated in the mid-19th century, formally proposed by C. F. de Jaenisch in 1862 as an extension of the non-attacking $n$-queens puzzle introduced by Max Bezzel in 1848. It asks for the minimum number of queens needed
to dominate an $n \times n$ chessboard: every cell must either contain a queen
or be attacked by one. A detailed account of the two-dimensional problem,
including the asymptotic result $\gamma(Q^2_n) \leq \tfrac{69}{133}n + O(1)$
and values of $\gamma(Q^2_n)$ for $n \leq 120$, is given in
\cite{osterCardWeakley}; see also \cite{weakley2022, finozhenok2007} for further
results. Chessboard domination problems more broadly are surveyed in
\cite{cockayne1990}.

The three-dimensional analogue is defined as follows. Let $Q^3_n$ denote the
graph whose vertex set is $[n]^3 := \{0,1,\ldots,n-1\}^3$ and in which two
distinct vertices are adjacent if one can be reached from the other by a 3D queen
move. A \emph{3D queen move} proceeds any number of steps along one of the 13
undirected line families through a cell:
\begin{itemize}
  \item 3 axis-parallel directions (along $x$, $y$, or $z$);
  \item 6 face-diagonal directions ($(\pm1,\pm1,0)$, $(\pm1,0,\pm1)$,
        $(0,\pm1,\pm1)$ up to sign);
  \item 4 space-diagonal directions ($(\pm1,\pm1,\pm1)$ up to sign).
\end{itemize}
The closed neighbourhood of $v$ is $N[v] = \{v\} \cup \{w : w \sim v\}$.
A set $S \subseteq [n]^3$ is a \emph{dominating set} if
$\bigcup_{v \in S} N[v] = [n]^3$, and $\gamma(Q^3_n)$ denotes its minimum
cardinality. The maximum degree of any vertex is $13(n-1)$, so the maximum
closed-neighbourhood size is $13n-12$.

The higher-dimensional queen problem was introduced by Barr and Rao
\cite{barrrao2007}, who established the first lower bound for domination in
$d$-dimensional queen graphs and showed that $n$ queens do not always suffice to
dominate an $n^d$-cell board when $d \geq 3$.

Section~\ref{sec:main} proves Theorem~\ref{thm:orbit}, which computes the
number of inner core cells covered by a queen at each position type, and
Corollary~\ref{cor:fundomain}, which gives a lossless symmetry reduction to the
fundamental domain of the board's symmetry group. Section~\ref{sec:bounds}
establishes asymptotic lower and upper bounds. Section~\ref{sec:computation}
describes how exact values for small $n$ are certified via integer linear
programming, including the symmetry reduction and the decomposed infeasibility
argument.

\section{Core Coverage by Position Type}
\label{sec:main}

Fix $n \geq 4$. Let $m = n-2$ and $C = \{1,\ldots,n-2\}^3$ be the
\emph{inner core} of $[n]^3$. The inner core is a useful region for measuring
queen efficiency: it is the set of non-boundary vertices of $Q^3_n$, and therefore
the region where all 13 line families extend in both directions.
A queen placed outside $C$ has one or more line families truncated or suppressed
entirely by proximity to the boundary. Making this precise requires computing,
for each position type, the number of core cells dominated. For $q \in[n]^3$
define $\kappa(q) := |N[q] \cap C|$.

The following lemma isolates the minimization used in case (iii).

\begin{lemma}
\label{lem:parity}
Let $M \geq 1$ be an integer and let $f(a,c) = |a-c| + |a+c-M|$ for integers
$0 \leq a, c \leq M$. Then $f(a,c) \equiv M \pmod{2}$ for all $a, c$. Hence
$f(a,c) \geq M \bmod 2$. If $M$ is even, equality holds if and only if
$a = c = M/2$. If $M$ is odd, equality holds exactly at
\[
(a,c) \in \left\{\left(\frac{M-1}{2},\frac{M-1}{2}\right),
\left(\frac{M-1}{2},\frac{M+1}{2}\right),
\left(\frac{M+1}{2},\frac{M-1}{2}\right),
\left(\frac{M+1}{2},\frac{M+1}{2}\right)\right\}.
\]
\end{lemma}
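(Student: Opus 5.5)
The plan is to first establish the parity claim and then characterize the equality cases through the identity $|u|+|v| = \max(|u+v|,|u-v|)$.

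For parity, note that $|t| \equiv t \pmod 2$ for every integer $t$. Hence
\[
f(a,c) \equiv (a-c) + (a+c-M) = 2a - M \equiv M \pmod 2.
\]
Since $f(a,c)$ is a nonnegative integer with the same parity as $M$, we get $f(a,c) \geq M \bmod 2$ immediately. This proves the first two assertions.

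For the equality cases, rewrite $f$ using the identity $|u|+|v| = \max(|u+v|,|u-v|)$. Taking $u = a-c$ and $v = a+c-M$ gives $u+v = 2a-M$ and $u-v = M-2c$, so
\[
f(a,c) = \max\bigl(|2a-M|,\,|2c-M|\bigr).
\]
This reduction is the key step; the rest is reading off minimizers.

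If $M$ is even, then $f=0$ exactly when $2a = M$ and $2c = M$, i.e.\ $a = c = M/2$. This choice lies in $[0,M]$, so it is admissible.

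If $M$ is odd, then $|2a-M|$ and $|2c-M|$ are both odd, hence each is at least $1$. Therefore $f = 1$ exactly when $|2a-M| = |2c-M| = 1$. This means $a, c \in \{(M-1)/2,\,(M+1)/2\}$ independently, which gives precisely the four listed pairs. All four lie in $[0,M]$ because $M \geq 1$.

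The only step needing care is verifying the identity $|u|+|v| = \max(|u+v|,|u-v|)$. It follows from a case split on the signs of $u$ and $v$: if they have the same sign, both sides equal $|u+v|$; if they have opposite signs, both sides equal $|u-v|$. Beyond that, the argument has no real obstacle.
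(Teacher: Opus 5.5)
Your proof is correct. The parity step matches the paper's, since both rest on $|t|\equiv t \pmod 2$. Where you differ is in characterizing the equality cases.

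The paper works directly with the sum. $f=0$ forces both summands to vanish ($a=c$ and $a+c=M$). For odd $M$, $f=1$ forces one summand to be $0$ and the other $1$. This gives two families, $a=c$ with $|2a-M|=1$ and $a+c=M$ with $|a-c|=1$, each contributing two of the four points.

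You instead apply $|u|+|v|=\max(|u+v|,|u-v|)$ to rewrite $f(a,c)=\max(|2a-M|,|2c-M|)$, which is twice the Chebyshev distance from $(a,c)$ to $(M/2,M/2)$. The minimizers then appear directly as the product set $\{(M-1)/2,(M+1)/2\}^2$, with no case split on which summand vanishes.

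The paper's route is more elementary and needs no auxiliary identity. Yours costs one small lemma but yields more: it describes every level set of $f$, which are concentric squares about the face centre. In Theorem~\ref{thm:orbit}(iii) this gives the exact formula $\kappa(q)=5m-4-\max(|2y-m-1|,|2z-m-1|)$ for every face cell. It also makes transparent the paper's remark that for even $m$ the maximum is attained at the four lattice points nearest the geometric centre.
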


\begin{proof}
Since $|a-c| = a+c-2\min(a,c)$, we have $|a-c| \equiv a+c \pmod{2}$. Likewise,
$|a+c-M| \equiv a+c+M \pmod{2}$. Therefore
$f(a,c) \equiv 2(a+c)+M \equiv M \pmod{2}$, and since $f \geq 0$ the bound
$f \geq M \bmod 2$ follows.

If $M$ is even: $f = 0$ requires $a = c$ and $a+c = M$ simultaneously, forcing
$a = c = M/2$, which is a unique integer solution.

If $M$ is odd: $f = 1$ is achieved in two families. Either $a = c$ with
$|2a-M| = 1$ (giving $a = c = (M \pm 1)/2$), or $a+c = M$ with $|a-c| = 1$
(giving $\{a,c\} = \{(M \pm 1)/2\}$). These give the four stated solutions.
\end{proof}

\begin{theorem}
\label{thm:orbit}
Let $n \geq 4$ and $m = n-2$.
\begin{enumerate}
  \item[\textnormal{(i)}] If $q$ is a corner vertex, then $\kappa(q) = m$.
  \item[\textnormal{(ii)}] If $q$ lies on an edge but not at a corner, then
        $\kappa(q) = 2m-1$.
  \item[\textnormal{(iii)}] If $q$ lies on a face but not on any edge, then
        \[
          \kappa(q) \;\leq\; 5m - 4 - \varepsilon_m,
          \qquad \varepsilon_m = (m-1)\bmod 2,
        \]
        with equality when the face centre is a lattice point, which occurs
        precisely when $m$ is odd.
  \item[\textnormal{(iv)}] If $q \in C$, then $\kappa(q) \leq 13m-12$, with
        equality when $q$ is at the centre of the board.
\end{enumerate}
In particular, $\max_{q\in\partial[n]^3}\kappa(q) < \max_{q\in C}\kappa(q)$
for all $n \geq 4$.
\end{theorem}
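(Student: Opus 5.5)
The plan is to compute $\kappa(q)$ directly by decomposing $N[q]$ into the 13 lines through $q$. Two distinct lines through $q$ meet only at $q$, so
\[
\kappa(q) = [q\in C] + \sum_{\text{13 directions } d} \#\{t\neq 0 : q+td \in C\}.
\]
By the symmetry of the board we may put $q$ in a convenient normal form in each case. For a boundary vertex, a line contributes nothing unless $d$ has a nonzero component in every coordinate in which $q$ sits at $0$ or $n-1$. Normalise so that the boundary coordinates are $0$ and the relevant components are $+1$. Then $t$ ranges over $1,\dots$ up to the first coordinate that leaves $\{1,\dots,m\}$, and each line contributes a minimum of simple linear expressions.

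\textbf{Cases (i) and (ii).} For the corner $q=(0,0,0)$, only $d=(1,1,1)$ survives, and it gives $t=1,\dots,m$; hence $\kappa=m$. For the edge point $q=(a,0,0)$ with $1\le a\le m$, the surviving directions are $(0,1,1)$ and $(\pm1,1,1)$. These contribute $m$, $m-a$ and $a-1$ respectively, so $\kappa=2m-1$, independent of $a$.

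\textbf{Case (iii).} Take $q=(a,b,0)$ with $1\le a,b\le m$; the surviving directions are those with $d_z=1$.
\begin{itemize}
\item The axis direction $(0,0,1)$ gives $m$.
\item The four face diagonals $(\pm1,0,1)$ and $(0,\pm1,1)$ give $(m-a)+(a-1)+(m-b)+(b-1)=2m-2$.
\item The four space diagonals give $\min(m-a,m-b)+\min(a-1,m-b)+\min(m-a,b-1)+\min(a-1,b-1)$.
\end{itemize}
Substitute $a'=a-1$, $b'=b-1$, $M=m-1$ and use $\min(x,y)=\tfrac12(x+y-|x-y|)$. The linear parts sum to $4M$ and the absolute-value parts to $2f(a',b')$, with $f$ as in Lemma~\ref{lem:parity}, so the space-diagonal sum equals $2M-f(a',b')$. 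Altogether
\[
\kappa(q)=5m-4-f(a',b').
\]
Lemma~\ref{lem:parity} now gives $f\ge M\bmod 2=\varepsilon_m$. When $m$ is odd, equality holds exactly at $a'=b'=M/2$, that is $a=b=(m+1)/2=(n-1)/2$, which is the lattice face centre. When $m$ is even, the bound $5m-4-1$ is still attained, at the four near-central points. This is the only step requiring genuine care: the bookkeeping of the four minima and the reduction to $f$. I expect it to be the main obstacle, and everything else is routine.

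\textbf{Case (iv) and the final inequality.} For $q\in C$, each of the 13 lines meets $C$ in at most $m$ cells, one of which is $q$. Hence $\kappa(q)\le 1+13(m-1)=13m-12$. When $n$ is odd, the centre $c=((m+1)/2,(m+1)/2,(m+1)/2)$ extends exactly $(m-1)/2$ steps each way in every direction, so every line is full and equality holds.

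For the final claim we cannot simply cite $13m-12$ as the maximum over $C$, since that value is not attained when $n$ is even. Instead we exhibit a good interior point.
\begin{itemize}
\item For $m=2$, $C$ is a $2\times2\times2$ cube whose cells are pairwise queen-adjacent, so $\kappa=8>5=\max_{\partial}\kappa$.
\item For $m\ge3$, take $q$ with all coordinates $\lfloor (m+1)/2\rfloor$. Every line through $q$ extends at least $\lfloor (m-1)/2\rfloor$ and at least $\lceil (m-1)/2\rceil-1$ steps on its two sides, so it contributes at least $m-2$ cells besides $q$. Hence $\max_C\kappa\ge 13m-25$.
\end{itemize}
The boundary maximum is $\max(m,\,2m-1,\,5m-4-\varepsilon_m)=5m-4-\varepsilon_m$. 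Since $13m-25>5m-4$ for $m\ge3$, the strict inequality follows for all $n\ge4$.
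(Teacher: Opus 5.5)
Your proof is correct and follows the paper's argument essentially step for step. It uses the same 13-line decomposition, the same direction counts in (i)--(ii), the same reduction of the four space-diagonal minima to $5m-4-f$ via Lemma~\ref{lem:parity} in (iii), and the same segment bound in (iv). The only deviation is the final separation: the paper evaluates the central cells exactly, obtaining $13m-18$ for even $m$, which beats $5m-4-\varepsilon_m$ already at $m=2$. You instead use the cruder bound $13m-25$ for $m\ge 3$ and handle $m=2$ separately by noting that $C$ is a clique, which is equally valid.
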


\begin{proof}
Since the 13 queen-line directions through any cell are mutually non-parallel,
two distinct queen lines through the same cell intersect only at that cell.

\smallskip\noindent\textbf{(iv).}
For $q \in C$, each of the 13 queen directions determines a segment; its
intersection with $C$ contains at most $m$ cells (including $q$). Since the
segments share only $q$,
\[
  \kappa(q) \;\leq\; 13(m-1)+1 \;=\; 13m-12.
\]
A cell at the centre of $C$ attains all 13 full segments, so equality holds.

\smallskip\noindent\textbf{(i).}
At $q = (0,0,0)$, a direction $(d_x,d_y,d_z)$ reaches $C = \{1,\ldots,m\}^3$
only if $d_x = d_y = d_z = 1$. That space diagonal visits exactly $m$ core
cells, so $\kappa(q) = m$.

\smallskip\noindent\textbf{(ii).}
At $q = (0,0,z)$ with $1 \leq z \leq m$, the directions reaching $C$ must have
$d_x = d_y = 1$. These are $(1,1,0)$, $(1,1,1)$, $(1,1,-1)$, with
core-segment lengths $m$, $m-z$, $z-1$ respectively. Thus
\[
  \kappa(q) \;=\; m + (m-z) + (z-1) \;=\; 2m-1.
\]

\smallskip\noindent\textbf{(iii).}
At $q = (0,y,z)$ with  $1 \leq y,z \leq m$, set $a = y-1$, $b = m-y$,
$c = z-1$, $d = m-z$, $M = m-1$, so $a+b = c+d = M$. All nine directions with
$d_x = 1$ reach $C$; their core-segment lengths are
\[
  m,\quad a,\quad b,\quad c,\quad d,\quad
  \min(a,c),\quad \min(a,d),\quad \min(b,c),\quad \min(b,d).
\]
Using $b = M-a$, $d = M-c$, and $\min(u,v) = \tfrac{1}{2}(u+v-|u-v|)$,
\[
  \min(a,c)+\min(a,d)+\min(b,c)+\min(b,d) \;=\; 2M - |a-c| - |a+c-M|.
\]
Therefore
\[
  \kappa(q) \;=\; 5m-4 - f(a,c), \quad f(a,c) = |a-c|+|a+c-M|.
\]
By Lemma~\ref{lem:parity}, $f(a,c) \geq M \bmod 2 = (m-1) \bmod 2 =
\varepsilon_m$, giving $\kappa(q) \leq 5m-4-\varepsilon_m$. When $M$ is even
(i.e., $m$ is odd), Lemma~\ref{lem:parity} gives a unique minimiser $a = c =
M/2$, corresponding to $y = z = (m+1)/2$, the unique lattice centre of the
face. When $M$ is odd ($m$ even), the face has no lattice centre and the
minimum $\varepsilon_m = 1$ is attained at the four lattice points closest to
the geometric centre.

\smallskip\noindent\textbf{Comparison.}
For $m \geq 2$, the boundary coverage is at most $5m - 4 - \varepsilon_m$. By (iv), the interior maximum is $13m - 12$ when $m$ is odd. When $m$ is even, evaluating any of the eight central-most cells in $C$ yields a coverage of exactly $13m - 18$. In either case, $\max_{q\in C}\kappa(q) \geq 13m - 18$. We thus have the separation:
\[
  m \;<\; 2m-1 \;\leq\; 5m-5 \;\leq\; 5m-4-\varepsilon_m \;<\; 13m-18 \;\leq\; \max_{q\in C}\kappa(q),
\]
so $\max_{q\in\partial[n]^3}\kappa(q) < \max_{q\in C}\kappa(q)$.
\end{proof}

\begin{corollary}
\label{cor:fundomain}
Let $O_h$ be the group of order $48$ acting on $[n]^3$ by coordinate
permutations and sign reflections, and let
\[
  F(n) = \bigl\{(x,y,z): 0 \leq x \leq y \leq z \leq (n-1)/2\bigr\}
\]
be a fundamental domain. Every dominating set of $Q^3_n$ has an
$O_h$-equivalent representative with a queen in $F(n)$. Theorem~\ref{thm:orbit}
shows that boundary placements are never better than interior placements for
core coverage. Furthermore, the symmetry reduction ensures that equivalent
configurations are not counted multiple times.
\end{corollary}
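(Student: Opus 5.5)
The plan is to reduce everything to two facts: every element of $O_h$ is a graph automorphism of $Q^3_n$, and every single cell of $[n]^3$ can be moved into $F(n)$ by some element of $O_h$. Here the "sign reflections" act on $[n]^3$ as $x \mapsto n-1-x$ in a chosen coordinate, and the permutations permute the three coordinates.

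\textbf{Step 1: $O_h$ acts by automorphisms.} A reflection $x\mapsto n-1-x$ sends the displacement $(d_x,d_y,d_z)$ between two cells to $(-d_x,d_y,d_z)$. A coordinate permutation permutes the entries of the displacement. Both operations map the set of 13 undirected line families (axis, face-diagonal and space-diagonal directions up to sign) to itself. They also map $[n]^3$ bijectively onto itself. Hence $w\sim v$ if and only if $g(w)\sim g(v)$, and so $g(N[v])=N[g(v)]$ for all $g\in O_h$. Consequently, if $S$ is dominating then $\bigcup_{v\in S}N[g(v)]=g\bigl(\bigcup_{v\in S}N[v]\bigr)=[n]^3$, so $g(S)$ is dominating of the same cardinality.

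\textbf{Step 2: moving one queen into $F(n)$.} Let $S$ be a dominating set and pick any $q=(q_1,q_2,q_3)\in S$. For each coordinate with $q_i>(n-1)/2$, apply the reflection in that coordinate. This replaces $q_i$ by $n-1-q_i<(n-1)/2$ and leaves the other coordinates unchanged. After this, all three coordinates lie in $[0,(n-1)/2]$. Next apply the coordinate permutation that sorts them into nondecreasing order. The composite $g$ lies in $O_h$, and $g(q)$ satisfies $0\le x\le y\le z\le (n-1)/2$, that is, $g(q)\in F(n)$. By Step 1, $g(S)$ is an $O_h$-equivalent dominating set containing a queen in $F(n)$. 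The check that $F(n)$ meets every orbit is exactly this argument. (Strictly, $F(n)$ may contain more than one point of some orbits on the symmetry planes, but only the covering property is needed.)

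\textbf{Step 3: the two interpretive sentences.} The claim that boundary placements are never better for core coverage will be cited directly from the Comparison part of Theorem~\ref{thm:orbit}. I would also note that $\kappa$ is $O_h$-invariant, since $C$ is $O_h$-stable and $g(N[q])=N[g(q)]$. This makes restricting a queen to $F(n)$ compatible with the stratification by position type. For the statement that equivalent configurations are not counted multiple times, I will read it as saying that the search may fix one representative per $O_h$-orbit, and Step 2 guarantees such a representative exists. The main obstacle is not mathematical but one of precision: these two sentences are informal. I would state explicitly which literal claim is being proved, namely the existence of the representative in Steps 1--2, and present the rest as consequences of Theorem~\ref{thm:orbit} and orbit invariance rather than as new assertions.
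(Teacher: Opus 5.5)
Your argument is correct and follows the paper's route: $O_h$ acts by automorphisms, so $S$ can be replaced by an image that has a queen in $F(n)$. Your Step~1 supplies the automorphism check that the paper only asserts, and your fold-and-sort map is exactly the computation behind the paper's appeal to ``the definition of the fundamental domain.''

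The one real difference is how the representative is selected. The paper takes the lexicographically smallest $O_h$-image of $S$. This image is canonical, one per orbit, and its \emph{lexicographically first} queen lies in $F(n)$; that queen is the least fold-and-sort vector over $q\in S$. This stronger form is what the remark about not counting equivalent configurations twice relies on. It is also what the first-queen constraints in Section~\ref{sec:computation} rely on. Moving an arbitrary queen into $F(n)$, as you do, proves only the literal existence claim.

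Your parenthetical caveat is also mistaken: $F(n)$ meets each $O_h$-orbit in exactly one point. An orbit is determined by its sorted folded coordinates $\min(q_i,n-1-q_i)$, and every point of $F(n)$ equals its own sorted folded vector.
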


\begin{proof}
Any dominating set $S$ may be replaced by $\sigma(S)$ for any $\sigma \in O_h$
without changing its cardinality or the domination property, since $O_h$ acts by
graph automorphisms of $Q^3_n$. Among all $O_h$-images of $S$, choose the
lexicographically smallest; this representative has its first queen in $F(n)$
by definition of the fundamental domain.
\end{proof}

\subsection*{Example: $n = 5$}

Take $n = 5$, so $m = 3$ and $C = \{1,2,3\}^3$. The four position types and
their core-coverage values are as follows.
\begin{itemize}
  \item \emph{Corner}, e.g.\ $q = (0,0,0)$: the only direction entering $C$ is
        $(1,1,1)$, which visits $(1,1,1)$, $(2,2,2)$, $(3,3,3)$. Thus
        $\kappa(q) = 3 = m$.

  \item \emph{Edge (non-corner)}, e.g.\ $q = (0,0,2)$: the three qualifying
        directions $(1,1,0)$, $(1,1,1)$, $(1,1,-1)$ contribute $3$, $1$, $1$
        core cells respectively, for a total of $\kappa(q) = 5 = 2m-1$.

  \item \emph{Face (non-edge)}, e.g.\ $q = (0,2,2)$: here $a = b = c = d = 1$
        and $M = 2$. Since $M$ is even, Lemma~\ref{lem:parity} gives the unique
        minimiser $a = c = 1$, attained at $q$ itself (the face centre). The
        nine directions contribute $3,1,1,1,1,1,1,1,1$ core cells, giving
        $\kappa(q) = 11 = 5m - 4 - \varepsilon_3 = 11$.

  \item \emph{Interior}, e.g.\ $q = (2,2,2)$: this is the unique centre of the
        board. All 13 directions run a full segment through $C$, each
        contributing $m = 3$ cells, so $\kappa(q) = 13(3)-12 = 27 = |C|$.
\end{itemize}

The strict inequalities $3 < 5 < 11 < 27$ instantiate the comparison in
Theorem~\ref{thm:orbit}. The centre $(2,2,2)$ is the only cell that dominates
all 27 cells of the core, and it alone suffices for $n = 3$ (where it is the
unique cell).

\section{Bounds}
\label{sec:bounds}

\begin{theorem}[{\cite{barrrao2007}}]
\label{thm:volume}
\[
  \gamma(Q^3_n) \;\geq\; \left\lceil \frac{n^3}{13n-12} \right\rceil.
\]
\end{theorem}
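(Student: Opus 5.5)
This bound follows from a direct counting argument, using the closed-neighbourhood size recorded in the introduction. Let $S$ be any dominating set of $Q^3_n$. By definition, $\bigcup_{v\in S} N[v] = [n]^3$. Subadditivity of cardinality then gives
\[
  n^3 \;=\; \Bigl|\bigcup_{v\in S} N[v]\Bigr| \;\leq\; \sum_{v\in S} |N[v]| \;\leq\; |S|\cdot \max_{v} |N[v]|.
\]
It therefore suffices to show that $|N[v]| \leq 13n-12$ for every $v\in[n]^3$. Dividing by $13n-12$ gives $|S| \geq n^3/(13n-12)$, and integrality of $|S|$ lets us take the ceiling. Minimising over $S$ yields the stated bound on $\gamma(Q^3_n)$.

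The neighbourhood bound comes from the same line-decomposition used in part (iv) of Theorem~\ref{thm:orbit}, now applied to the whole board $[n]^3$ rather than the core $C$. Fix $v$. Every neighbour of $v$ lies on one of the 13 lines through $v$ with direction vectors
\[
  (1,0,0),\ (0,1,0),\ (0,0,1),\ (1,\pm1,0),\ (1,0,\pm1),\ (0,1,\pm1),\ (1,\pm1,\pm1).
\]
Each such line meets $[n]^3$ in at most $n$ cells, because along each direction at least one coordinate changes by exactly $1$ per step and stays in $\{0,\ldots,n-1\}$. Removing $v$ itself leaves at most $n-1$ other cells per line. The 13 directions are pairwise non-parallel, so two distinct lines meet only at $v$, as noted at the start of the proof of Theorem~\ref{thm:orbit}. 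Hence
\[
  |N[v]| \;\leq\; 1 + 13(n-1) \;=\; 13n-12,
\]
which matches the maximum recorded in the introduction.

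No step here is a genuine obstacle. The only point needing care is the claim that each line meets the board in at most $n$ cells; this holds because the coordinate with nonzero step takes distinct values along the line, and there are only $n$ such values. Since the theorem is attributed to \cite{barrrao2007}, one could also simply cite it, but the self-contained counting argument above is short enough to include.
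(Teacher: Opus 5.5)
Your proposal is correct and takes the same route as the paper: the counting bound $n^3 \le |S|\,(13n-12)$ followed by taking the ceiling. The only difference is that the paper takes $|N[v]|\le 13n-12$ as already stated in the introduction, whereas you prove it via the 13-line decomposition; that argument is sound, and the union bound alone would give it even without using that the lines meet only at $v$.
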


\begin{proof}
Because no cell can have a closed neighbourhood larger than $13n-12$, any
dominating set has cardinality at least $\lceil n^3/(13n-12)\rceil$.
\end{proof}

\begin{theorem}
\label{thm:projection}
$\gamma(Q^3_n) \geq \gamma(Q^2_n)$.
\end{theorem}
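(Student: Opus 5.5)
The plan is to project a minimum dominating set of $Q^3_n$ onto a coordinate plane and check that the image dominates the $n\times n$ board $Q^2_n$. Concretely, let $S\subseteq[n]^3$ be a dominating set with $|S|=\gamma(Q^3_n)$. Define $\pi:[n]^3\to[n]^2$ by $\pi(x,y,z)=(x,y)$ and set $T=\pi(S)$. Then $|T|\le|S|$, since distinct queens may collapse to the same image. So it suffices to show that $T$ dominates $Q^2_n$, which gives $\gamma(Q^2_n)\le|T|\le\gamma(Q^3_n)$.

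The key step is that $\pi$ sends 3D queen adjacency to 2D queen adjacency or equality. Suppose $w=(x,y,z)$ and $q=(a,b,c)$ satisfy $w\in N[q]$ in $Q^3_n$. Then $w-q=t\,(d_x,d_y,d_z)$ for some integer $t$ and some direction with $d_x,d_y,d_z\in\{-1,0,1\}$ among the 13 families. Hence $(x-a,y-b)=t\,(d_x,d_y)$. There are three cases for $(d_x,d_y)$:
\begin{itemize}
\item If $(d_x,d_y)=(0,0)$, which happens only for the axis direction $(0,0,1)$, then $\pi(w)=\pi(q)$, so $\pi(w)$ is occupied by a queen of $T$.
\item If $(d_x,d_y)$ is one of $(\pm1,0)$ or $(0,\pm1)$, then $\pi(w)$ and $\pi(q)$ lie on a common rank or file.
\item If $(d_x,d_y)$ is one of $(\pm1,\pm1)$, then $\pi(w)$ and $\pi(q)$ lie on a common diagonal.
\end{itemize}
In every case $\pi(w)\in N[\pi(q)]$ in $Q^2_n$. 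Note that a 2D queen move has unbounded range, and both points lie in $[n]^2$, so the intermediate cells are automatically on the board.

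To conclude, take any $(x,y)\in[n]^2$ and consider the 3D cell $(x,y,0)$. Since $S$ dominates, some $q\in S$ has $(x,y,0)\in N[q]$. By the key step, $(x,y)=\pi(x,y,0)\in N[\pi(q)]$ with $\pi(q)\in T$. Thus $T$ dominates $Q^2_n$.

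There is no real obstacle here. The only point needing care is the case analysis of the 13 directions under projection, in particular the degenerate vertical axis direction, which collapses to a point rather than a line. That case is harmless because closed neighbourhoods are used throughout.
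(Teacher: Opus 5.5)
Your proof is correct and follows essentially the same route as the paper's: project a dominating set of $Q^3_n$ onto the $xy$-plane, note that the planar part of any 3D queen move is a 2D queen move or zero, and conclude that the projection dominates $Q^2_n$ with $|P(S)|\le|S|$. Your explicit case split on $(d_x,d_y)$, including the vertical axis direction that collapses to a point, spells out the step the paper states in one line.
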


\begin{proof}
Let $S$ be a dominating set of $Q^3_n$ and let
$P(S) = \{(x,y) : (x,y,z) \in S \text{ for some } z\}$. For any $(x,y) \in[n]^2$ and any height $z$, the cell $(x,y,z)$ is dominated by some
$(a,b,c) \in S$. The difference $(a-x,b-y,c-z)$ is a valid 3D queen move, so
$(a-x,b-y)$ is a valid 2D queen move (or zero), and $(a,b) \in P(S)$ dominates
$(x,y)$ in $Q^2_n$. Hence $P(S)$ is a dominating set of $Q^2_n$, and
$|S| \geq |P(S)| \geq \gamma(Q^2_n)$.
\end{proof}

\begin{theorem}
\label{thm:lifting}
$\gamma(Q^3_n) \leq n\cdot\gamma(Q^2_n) \leq \tfrac{69}{133}\,n^2 + O(n)$.
\end{theorem}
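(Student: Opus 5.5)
The plan is a layering construction. Fix a minimum dominating set $D \subseteq [n]^2$ of the 2D queen graph $Q^2_n$, so $|D| = \gamma(Q^2_n)$. Lift it to every horizontal slice:
\[
  S = \{(x,y,z) : (x,y) \in D,\ z \in [n]\}.
\]
Then $|S| = n\,\gamma(Q^2_n)$. It suffices to show that $S$ dominates $Q^3_n$.

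To check domination, take any cell $(x,y,z)$. Since $D$ dominates $Q^2_n$, there is some $(a,b) \in D$ with $(a,b) = (x,y)$ or $(a,b)$ attacking $(x,y)$ in two dimensions. Consider the queen $(a,b,z) \in S$, which lies in the same slice as $(x,y,z)$. Its displacement to $(x,y,z)$ is $(x-a,\,y-b,\,0)$.

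\begin{itemize}
  \item If $(x-a,\,y-b) = (0,0)$, the queen occupies the cell itself.
  \item If $(x-a,\,y-b)$ is a nonzero multiple of $(1,0)$, $(0,1)$, $(1,1)$ or $(1,-1)$, then $(x-a,\,y-b,\,0)$ is a nonzero multiple of one of the axis-parallel directions $(1,0,0)$, $(0,1,0)$ or of the face-diagonal directions $(1,1,0)$, $(1,-1,0)$. All four are among the 13 line families.
\end{itemize}

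The whole segment between the two cells stays inside $[n]^3$ by convexity. Hence $(a,b,z)$ reaches $(x,y,z)$ by a 3D queen move, which gives $\gamma(Q^3_n) \le n\,\gamma(Q^2_n)$.

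For the second inequality, I would invoke the cited asymptotic bound $\gamma(Q^2_n) \le \tfrac{69}{133}n + O(1)$ from \cite{osterCardWeakley} and multiply through by $n$. This gives $\tfrac{69}{133}n^2 + O(n)$.

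Nothing here is a genuine obstacle. The only point needing a sentence of care is confirming that in-slice 2D queen moves embed as 3D queen moves; the 3D move set contains the $z$-constant axis and face-diagonal families, so they do. Combined with Theorem~\ref{thm:volume}, which gives $\gamma(Q^3_n) = \Omega(n^2)$, this yields $\Theta(n^2)$.
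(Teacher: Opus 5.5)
Your proposal is correct and follows the paper's proof: you place a minimum 2D dominating set in each of the $n$ slices, then multiply the cited \"{O}sterg\r{a}rd--Weakley bound $\gamma(Q^2_n)\le\tfrac{69}{133}n+O(1)$ by $n$. Your check that in-slice 2D queen moves are 3D queen moves (the $z$-constant axis and face-diagonal families) spells out a step the paper leaves implicit.
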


\begin{proof}
Placing a minimum 2D dominating set in each layer $z = 0,\ldots,n-1$ dominates
every cell of $Q^3_n$, giving $\gamma(Q^3_n) \leq n\cdot\gamma(Q^2_n)$.
The bound $\gamma(Q^2_n) \leq \tfrac{69}{133}\,n + O(1)$ is due to
\"{O}sterg\r{a}rd and Weakley \cite{osterCardWeakley}.
\end{proof}

Theorems~\ref{thm:volume}--\ref{thm:lifting} give
\[
  \left\lceil \frac{n^3}{13n-12} \right\rceil \;\leq\; \gamma(Q^3_n) \;\leq\;
  n\cdot\gamma(Q^2_n) \;\leq\;
  \frac{69}{133}\,n^2 + O(n),
\]
and Theorem~\ref{thm:projection} gives the additional lower bound
$\gamma(Q^3_n) \geq \gamma(Q^2_n)$. Hence $\gamma(Q^3_n) = \Theta(n^2)$ (however, this is asymptotically subsumed by Theorem ~\ref{thm:volume}).
The lower-bound constant is $1/13 \approx 0.077$ and the upper-bound constant
is $69/133 \approx 0.519$.

\section{Computational Certification}
\label{sec:computation}

\subsection{The domination ILP}

The domination number $\gamma(Q^3_n)$ is the optimal value of the integer linear
program
\begin{equation}
\label{eq:ilp}
  \min\bigl\{\mathbf{1}^\top x : (A+I)x \geq \mathbf{1},\;
  x \in \{0,1\}^{n^3}\bigr\},
\end{equation}
where $A$ is the $n^3 \times n^3$ adjacency matrix of $Q^3_n$. For each cell $v$
and each of the 13 canonical direction vectors $u$, all vertices on the maximal
queen line through $v$ in direction $u$ are adjacent to $v$; enumerating these
lines for all $v$ constructs the full adjacency relation in $O(n^4)$ time.
An implementation of this procedure, together with the solver configuration used
to obtain the values in Table~\ref{tab:values}, is available in \cite{code}.

\subsection{Automorphisms of $Q^3_n$ and the fundamental domain}

The octahedral group $O_h$ of order $48$, generated by coordinate permutations
and sign reflections, acts on $[n]^3$ by graph automorphisms of $Q^3_n$.
Corollary~\ref{cor:fundomain} guarantees that every dominating set has an
$O_h$-equivalent representative with a queen in the fundamental domain $F(n)$.
A canonical-orbit formulation can be encoded by adding, for each cell
$c \notin F(n)$, a linear inequality requiring that $c$ be selected only if some
lexicographically earlier cell is also selected. These constraints eliminate
symmetric candidates without excluding any optimal solution.

\subsection{Decomposition by first-queen position}

To certify that $k-1$ queens are insufficient, the feasibility space of
\eqref{eq:ilp} (with target $k-1$) is partitioned by the choice of the
lexicographically first queen. For each candidate first-queen cell $c$, a
subproblem is formed by fixing one queen at $c$ and forbidding all cells
lexicographically before $c$. The subproblems are mutually exclusive and
collectively exhaustive: their union covers every possible placement of a
lexicographically first queen without overlap. Every subproblem being infeasible
constitutes a certificate of optimality for the $k$-queen solution.

\subsection{Independent verification}

Every candidate solution is verified by an independent checker that confirms,
directly from the definition of $Q^3_n$ adjacency, that every cell of $[n]^3$
is dominated. This checker is independent of the constraint matrix used in
\eqref{eq:ilp}.

\subsection*{Results}

\begin{table}[ht]
\centering
\caption{Known values of $\gamma(Q^3_n)$.}
\label{tab:values}
\begin{tabular}{ccll}
\toprule
$n$ & $\gamma(Q^3_n)$ & Status & Example placement \\
\midrule
1 & 1 & Exact & $(0,0,0)$ \\
\midrule
2 & 1 & Exact & $(1,0,0)$ \\
\midrule
3 & 1 & Exact & $(1,1,1)$ \\
\midrule
4 & 4 & Exact & $(1,0,3),(1,1,0),(1,2,0),(1,3,3)$ \\
\midrule
5 & 6 & Exact & $(1,0,3),(1,1,0),(1,3,4),(1,4,1),(2,2,2),(3,2,2)$ \\
\midrule
6 & 8 & Exact & \makecell[l]{(2, 2, 2), (2, 2, 3), (2, 3, 2), (2, 3, 3), \\ (3, 2, 2), (3, 2, 3), (3, 3, 2), (3, 3, 3)} \\
\midrule
7 & 10--12 & Open & \makecell[l]{(0, 4, 3), (0, 6, 6), (1, 1, 5), (2, 3, 0), \\ (2, 4, 0), (3, 0, 2), (3, 6, 4), (4, 3, 6), \\ (4, 6, 4), (5, 0, 1), (6, 2, 3), (6, 5, 1)} \\
\bottomrule
\end{tabular}
\end{table}
\noindent The exact values for $n \leq 6$ are certified optimal using the Gurobi Optimizer \cite{gurobi}, with the computational lower and upper bounds of the ILP solver coinciding. For $n = 3$, the single centre cell dominates all 27 cells, as it lies on all 13 queen lines. For $n = 6$, the eight cells of the $2\times2\times2$ central block suffice, and the solver completed the proof of optimality in approximately 5 seconds. 

The value for $n = 7$ remains open. The worst-case search space heuristically scales as $O(2^{n^3})$; the solver was halted after 48 hours of computation for $n = 7$, establishing a computational lower bound of 10 and a best-found upper bound of 12.


\begin{thebibliography}{9}

\bibitem{barrrao2007}
J.~Barr and S.~Rao,
\textit{The $n$-queens problem in higher dimensions},
preprint, arXiv:0712.2309, 2007.
\url{https://doi.org/10.48550/arXiv.0712.2309}

\bibitem{cockayne1990}
E.~J.~Cockayne,
\textit{Chessboard domination problems},
\textit{Discrete Math.}\ \textbf{86} (1990), 13--20.

\bibitem{finozhenok2007}
D.~Finozhenok and W.~D.~Weakley,
\textit{An improved lower bound for domination numbers of the queen's graph},
\textit{Australas.\ J.\ Combin.}\ \textbf{37} (2007), 295--300.

\bibitem{osterCardWeakley}
P.~R.~J.~\"{O}sterg\r{a}rd and W.~D.~Weakley,
\textit{Values of domination numbers of the queen's graph},
\textit{Electron.\ J.\ Combin.}\ \textbf{8} (2001), no.~1, R29.

\bibitem{weakley2022}
W.~D.~Weakley,
\textit{Queen domination of even square boards},
\textit{Electron.\ J.\ Combin.}\ \textbf{29} (2022), no.~2, P2.50.

\bibitem{code}
\textit{3D queen domination: computational certification code},
Zenodo, 2026.
\url{https://doi.org/10.5281/zenodo.19412672}

\bibitem{gurobi}
Gurobi Optimization, LLC,
\textit{Gurobi Optimizer Reference Manual},
2026.
\url{https://www.gurobi.com}

\end{thebibliography}
\end{document}